\documentclass[a4paper,12pt]{amsart}
\pdfoutput=1

\usepackage[sc]{mathpazo}
\linespread{1.05}         
\usepackage[T1]{fontenc}

\usepackage{color}
\definecolor{red}{rgb}{1,0,0}

\definecolor{gre}{rgb}{0,0.7,0}

\usepackage{graphicx}
\usepackage{amsmath}
\usepackage{amssymb}
\usepackage[active]{srcltx}
\usepackage{hyperref}

\addtolength{\evensidemargin}{-15mm}
\addtolength{\oddsidemargin}{-15mm}
\addtolength{\textwidth}{30mm}
\addtolength{\textheight}{20mm}
\addtolength{\topmargin}{-10mm}

%
\newtheorem{prop}{Proposition}%
\theoremstyle{definition}

\theoremstyle{remark}
\theoremstyle{plain}


%

\def\NN{{\mathbb N}}

\def\RR{{\mathbb R}}

\def\ZZ{{\mathbb Z}}
\def\Z{{\mathbb Z}}

\def\vecb{{\text{\boldmath$b$}}}

\def\vece{{\text{\boldmath$e$}}}

\def\vecr{{\text{\boldmath$r$}}}

\def\vecs{{\text{\boldmath$s$}}}

\def\vecu{{\text{\boldmath$u$}}}

\def\vecx{{\text{\boldmath$x$}}}

\def\vecnull{{\text{\boldmath$0$}}}

\def\scrA{{\mathcal A}}

\def\scrJ{{\mathcal J}}

\def\scrL{{\mathcal L}}

\def\SL{\operatorname{SL}}

\def\GamG{\Gamma\backslash G}

\def\SLZ{\SL(2,\ZZ)}
\def\SLR{\SL(2,\RR)}

\begin{document}

\title{The three gap theorem and the space of lattices}
\author{Jens Marklof}
\author{Andreas Str\"ombergsson}
\address{School of Mathematics, University of Bristol,
Bristol BS8 1TW, U.K.\newline
\rule[0ex]{0ex}{0ex} \hspace{8pt}{\tt j.marklof@bristol.ac.uk}}
\address{Department of Mathematics, Box 480, Uppsala University,
SE-75106 Uppsala, Sweden\newline
\rule[0ex]{0ex}{0ex} \hspace{8pt}{\tt astrombe@math.uu.se}}\date{\today}
\thanks{To appear in the American Mathematical Monthly}
\subjclass[2010]{11H06; 11J71, 52C05}
\date{14 December 2016/21 June 2017}

\begin{abstract}
The three gap theorem (or Steinhaus conjecture) asserts that there are at most three distinct gap lengths in the fractional parts of the sequence $\alpha,2\alpha,\ldots,N\alpha$, for any integer $N$ and real number $\alpha$. This statement was proved in the 1950s independently by various authors. Here we present a different approach using the space of two-dimensional Euclidean lattices. 
\end{abstract}

\maketitle

Imagine we divide a cake by cutting a first wedge at an angle $\alpha$, then an identical second, third,  and so on as illustrated in Figure \ref{fig0} (left), until the remaining piece is either of the same size as the previous, or smaller. We now have a cake comprising wedges of at most two distinct sizes: the size of the original and that of the left-over wedge. Suppose we continue cutting but insist that after each cut we rotate the knife by the same angle $\alpha$ as before, see Figure \ref{fig0} (right). How many different sizes of cake wedges are there after $N$ cuts? The celebrated ``three gap theorem'' states that for each $N$ there will be at most three! This surprising fact was understood by number theorists in the late 1950s \cite{Sos57,Sos58,Suranyi58,Swierczkowski59}. Various new proofs have appeared since then, with connections to continued fractions \cite{Slater,Ravenstein}, Riemannian geometry \cite{BS} and elementary topology \cite[App.~A]{Haynes}, as well as higher-dimensional generalisations \cite{Bleher,Chevallier,Vijay}. Our aim here is to provide a simple proof of the three gap phenomenon by exploiting the geometry of the space of two-dimensional Euclidean lattices. 

\begin{figure}[h]
\begin{center}
\begin{minipage}{1.0\textwidth}
\unitlength0.1\textwidth
\begin{picture}(8,3.1)(0,0)
\put(1.2,0.1){\includegraphics[width=0.3\textwidth]{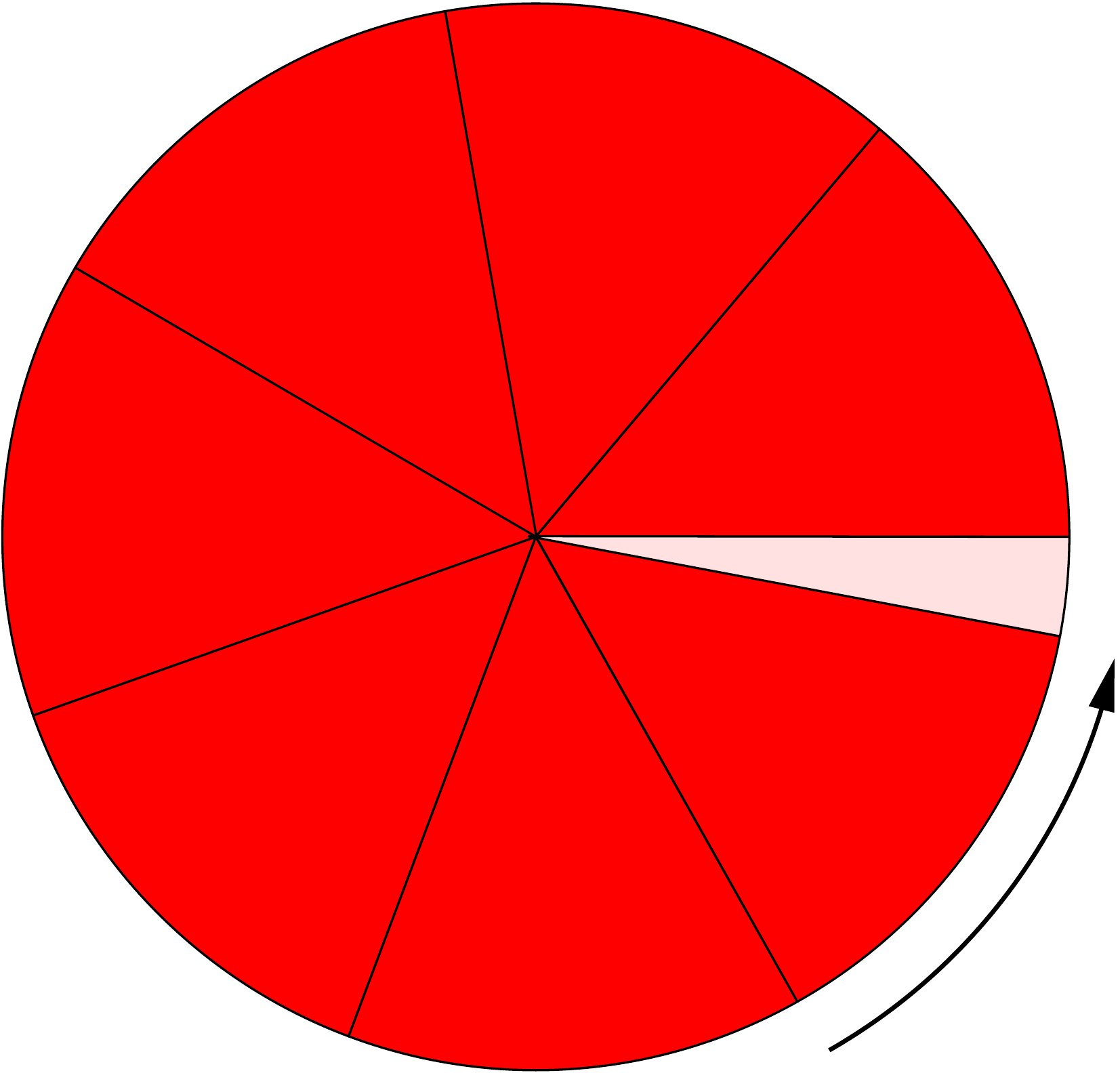}}
\put(5.4,0.1){\includegraphics[width=0.3\textwidth]{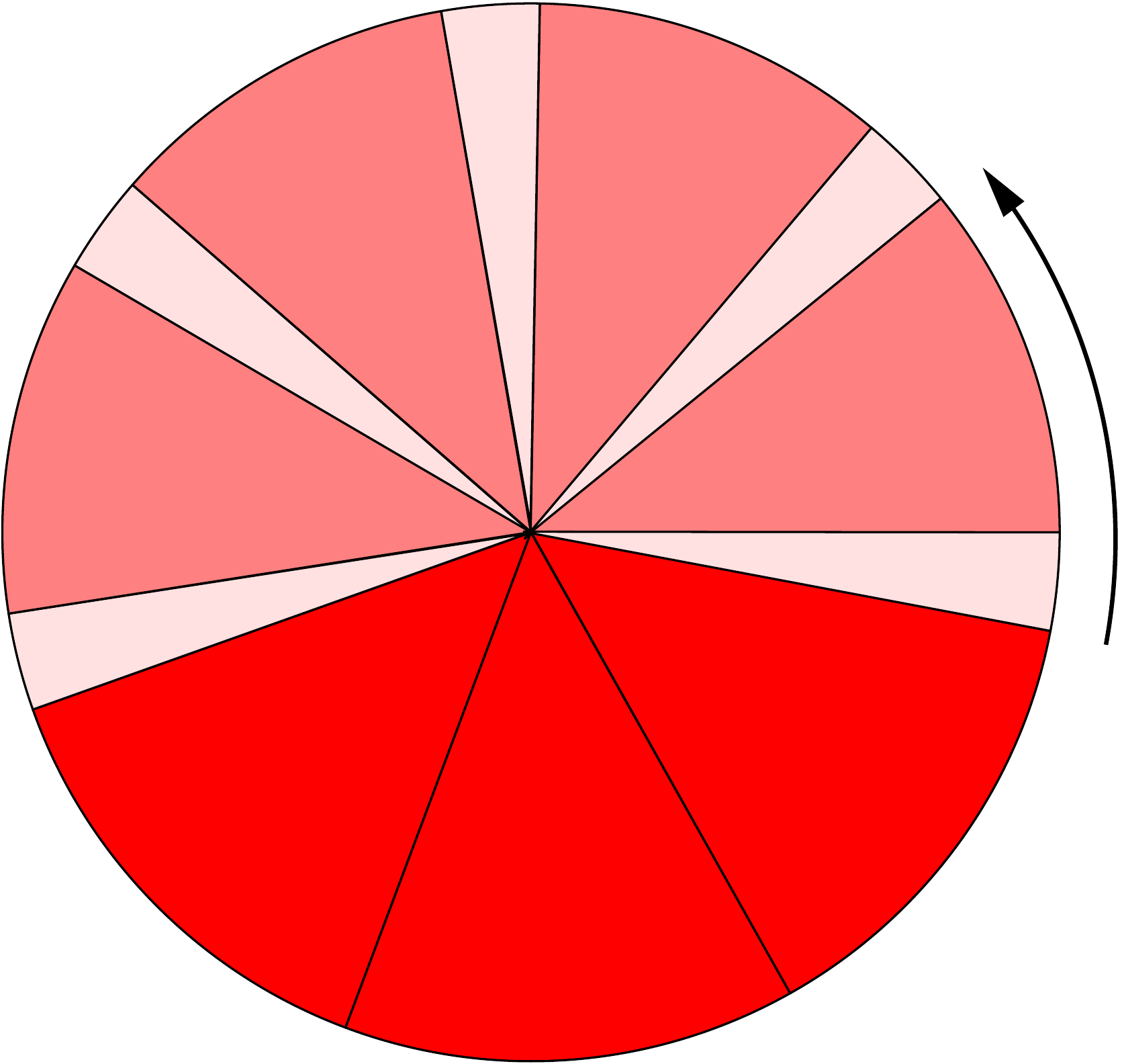}}
\put(4.05,0.6){$\alpha$} 
\put(8.35,2.1){$\alpha$} 
\end{picture}
\end{minipage}
\end{center}
\caption{For each given $N$, there are at most three different wedge sizes.} \label{fig0}
\end{figure}

The standard example of a Euclidean lattice in $\RR^2$ is the square lattice $\ZZ^2$. We can generate any other Euclidean lattice $\scrL$ in $\RR^2$ by applying a linear transformation to $\ZZ^2$. Writing points in $\RR^2$ as row vectors $\vecx=(x_1,x_2)$, we have explicitly
\begin{equation}
\scrL = \ZZ^2 M=\{(m,n)M\mid (m,n)\in\ZZ^2\} ,
\end{equation}
where $M$ is a $2\times 2$ matrix with real coefficients. If 
\begin{equation}
M=\begin{pmatrix} a & b \\ c & d \end{pmatrix} , \qquad \det M=ad-bc\neq 0,
\end{equation}
then a basis of the lattice $\scrL=\ZZ^2 M$ is given by the linearly independent vectors
\begin{equation}
\vecb_1 = \vece_1 M = (a,b), \qquad \vecb_2 = \vece_2 M = (c,d),
\end{equation}
where $\vece_1=(1,0)$,  $\vece_2=(0,1)$ is the standard basis of $\ZZ^2$. All other bases of $\scrL$ with the same orientation can be obtained by replacing $M$ by $\gamma M$ provided $\gamma\in\Gamma=\SLZ$, the group of matrices with integer coefficients and unit determinant. 
In the following we restrict our attention to lattices $\scrL=\ZZ^2 M$ whose basis vectors span a parallelogram of unit area. This means that $\det M = \pm1$, and by reversing the orientation of a basis vector where necessary (this will not change the lattice), we can assume in fact that $\det M=1$.
Let us therefore denote by $G=\SLR$ the group of real matrices with unit determinant.
The ``modular group'' $\Gamma=\SLZ$ is a discrete subgroup of  $G$, and the space of lattices can in this way be identified with the coset space $\GamG=\{ \Gamma g \mid g\in G\}$. 

In order to translate the three gap problem into the setting of lattices, let us measure all angles in units of $360^\circ$. That is, angles are parametrized by the coset space $\RR/\ZZ=\{ x+\ZZ \mid x\in\RR\}$ (the set of reals taken modulo one), which we can think of as the unit interval $[0,1]$ with the endpoints $0$ and $1$ identified. Fix $\alpha\in\RR/\ZZ$, and let $\xi_k=\{ k\alpha \}$ be the fractional part of $k\alpha$. The quantity $\xi_k$ represents the angular position of the $k$th cut. The angles of the resulting cake wedges after $N$ cuts are precisely the gaps between the elements of the sequence $(\xi_k)_{k=1}^{N}$ on $\RR/\ZZ$. These gaps are, in other words, the lengths of the intervals that $\RR/\ZZ$ is partitioned into by $(\xi_k)_{k=1}^{N}$. 

The gap between $\xi_k$ and its {\em next} neighbor on $\RR/\ZZ$ (this is not necessarily the {\em nearest} neighbor, as the gap to the element preceding $\xi_k$ may be the smaller one) is given by 
\begin{equation}
s_{k,N}  = \min\{ (\ell-k)\alpha + n > 0 \mid (\ell,n)\in\ZZ^2,\; 0< \ell \leq N  \} .
\end{equation}
The substitution $m=\ell-k$ yields
\begin{equation}\label{0st}
s_{k,N}  = \min\{ m \alpha + n  >  0 \mid (m,n)\in\ZZ^2,\; -k< m \leq N-k \} .
\end{equation}
We rewrite \eqref{0st} as
\begin{equation}\label{resca}
s_{k,N} = \min\{ y > 0 \mid (x,y)\in\ZZ^2 A_1 , \; -k< x \leq N-k \},
\end{equation}
with the matrix
\begin{equation}\label{lattice0}
A_1=\begin{pmatrix} 1 & \alpha \\ 0 & 1 \end{pmatrix} .
\end{equation}
The lattice $\ZZ^2 A_1$ and $s_{k,N}$ are illustrated in Figure \ref{fig001}.

\begin{figure}
\begin{center}
\begin{minipage}{0.4\textwidth}
\unitlength0.1\textwidth
\begin{picture}(10,10)(0,0)
\put(0,1){\includegraphics[width=\textwidth]{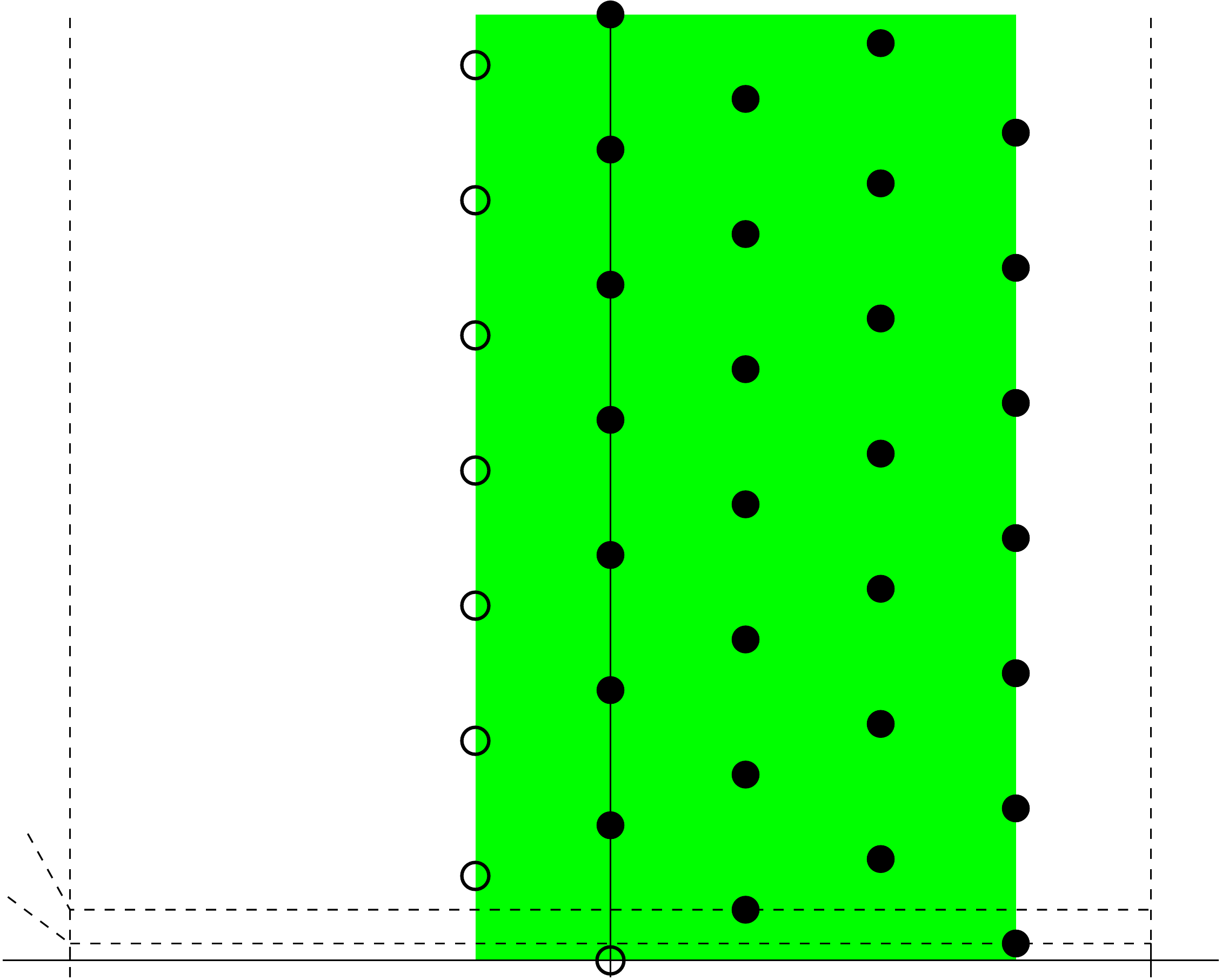}}
\put(-1,1.6){$s_{k,N}$}\put(-0.2,2.2){$\alpha$}  
\put(0,0.5){$-N$}\put(3.2,0.5){$-k$} \put(7.3,0.5){$N-k$} \put(9.3,0.5){$N$} 
\end{picture}
\end{minipage}
\end{center}
\caption{Illustration of the the expression for $s_{k,N}$ in \eqref{resca} (here $N=4$, $k=1$).} \label{fig001}
\end{figure}

Now take a general element $M\in G$ and $0<t\leq 1$, and define the function $F$ by
\begin{equation}\label{Fdef}
F(M,t)=\min\bigg\{ y > 0 \;\bigg|\; (x,y)\in\ZZ^2 M, \; -t < x \leq 1-t \bigg\} .
\end{equation}
To see the connection of $F$ with the gap $s_{k,N}$, define
\begin{equation}\label{lattice}
A_N=\begin{pmatrix} 1 & \alpha \\ 0 & 1 \end{pmatrix} \begin{pmatrix} N^{-1} & 0 \\ 0 & N \end{pmatrix} \in G,
\end{equation}
and note that, by rescaling the set in \eqref{resca}, we have
\begin{equation}
s_{k,N} = \frac1N \min\bigg\{ y > 0 \;\bigg|\; (x,y)\in\ZZ^2 A_N , \; -\frac{k}{N} < x \leq 1-\frac{k}{N} \bigg\}.
\end{equation}
Thus, 
\begin{equation}\label{key}
s_{k,N}= \frac1N \, F\bigg(A_N,\frac{k}{N}\bigg).
\end{equation}

We first check $F$ is well-defined as a function on the space of lattices $\GamG$ (Proposition \ref{prop1}), and then establish that the function $t\mapsto F(M,t)$ only takes at most three values for every fixed $M\in G$ (Proposition \ref{prop:three}). The latter implies the three gap theorem via \eqref{key}.

\begin{prop}\label{prop1}
$F$ is well-defined as a function $\GamG \times (0,1] \to \RR_{> 0}$.
\end{prop}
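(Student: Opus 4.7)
The plan is to verify three ingredients: $\Gamma$-invariance, non-emptiness of the minimization set, and attainment of the minimum. For $\Gamma$-invariance, I note that for any $\gamma \in \Gamma = \SLZ$, left multiplication by $\gamma$ is a bijection of $\ZZ^2$ onto itself, so $\ZZ^2 \gamma M = \ZZ^2 M$ as subsets of $\RR^2$, and hence $F(\gamma M, t) = F(M, t)$. This allows $F$ to descend to $\GamG \times (0, 1]$.

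For non-emptiness, the key observation is that $0 \in (-t, 1-t]$ for every $t \in (0, 1]$ (since $-t < 0 \leq 1-t$). I would split into cases according to whether the projection $\pi \colon \ZZ^2 M \to \RR$, $(x, y) \mapsto x$, has non-trivial kernel. If $\ker \pi \neq 0$, then $\ker \pi$ is a cyclic subgroup of the $y$-axis, generated (after a sign choice) by some $(0, \beta)$ with $\beta > 0$; this point lies in the set since $0 \in (-t, 1-t]$ and $\beta > 0$. If $\ker \pi = 0$, then $\pi$ is injective and its image $\ZZ a + \ZZ c$ is a dense subgroup of $\RR$; here I would apply Minkowski's theorem to a narrow centered rectangle $[-\delta, \delta] \times [-Y, Y]$, with $\delta < \min(t, 1-t)$ and $Y > 1/\delta$ so that the area exceeds $4$, to obtain a nonzero lattice point that, after negating if necessary, lies in the strip. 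The boundary case $t = 1$, where no symmetric subinterval of $(-1, 0]$ exists, must be handled by a variant argument exploiting the density of $\pi(\ZZ^2 M)$ together with the fact that the lattice contains points of unbounded $|y|$ in its fibers.

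Finally, the minimum is attained: once any single $y_0$ in the defining set is known, the lattice points with $y$-coordinate at most $y_0$ lie in the bounded rectangle $[-t, 1-t] \times [0, y_0]$, which contains only finitely many points of $\ZZ^2 M$ by discreteness. Hence the infimum is realized and lies in $\RR_{>0}$. The most delicate step is the non-emptiness argument in the case $\ker \pi = 0$, and especially at the boundary $t = 1$, because the naive strategy of taking any lattice vector with positive $y$-coordinate and then shifting horizontally into the strip fails: the unit horizontal translation $(1, 0)$ is not a lattice vector in general, so such a shift is not available inside $\ZZ^2 M$.
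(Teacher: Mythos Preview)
Your $\Gamma$-invariance and attainment-of-minimum arguments are fine, and your case split on $\ker\pi$ is a reasonable alternative to the paper's split on the matrix entries. However, the non-emptiness argument in the case $\ker\pi=0$ has two genuine gaps.

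First, Minkowski's theorem applied to $[-\delta,\delta]\times[-Y,Y]$ may return a nonzero lattice point with $y=0$; negating does not help here. This really can occur even when $\ker\pi=0$: for $M=\left(\begin{smallmatrix}1&0\\\sqrt2&1\end{smallmatrix}\right)$ the projection $\pi$ is injective, yet $(1,0)\in\ZZ^2M$. You must either shrink $\delta$ below the shortest nonzero lattice vector on the $x$-axis, or give a separate argument for this sub-case.

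Second, the boundary case $t=1$ is not actually handled. When $\ker\pi=0$ the map $\pi$ is injective, so its fibers are singletons and your phrase ``unbounded $|y|$ in its fibers'' has no content. Density of $\pi(\ZZ^2M)$ gives infinitely many lattice points with $x\in(-1,0)$, and discreteness forces their $|y|$-values to be unbounded, but this does not by itself yield one with $y>0$; an additional step is required (for instance, pick a primitive $\vecv=(x_0,y_0)\in\scrL$ with $0<x_0<1$, complete to a basis $\vecv,\vecw$ with $\det(\vecv,\vecw)=1$, and show that $k\vecw+n\vecv$ lands in $(-1,0)\times(0,\infty)$ for a suitable choice of $k,n$).

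For comparison, the paper's proof avoids both difficulties by working directly with the matrix entries. Writing $M=\left(\begin{smallmatrix}a&b\\c&d\end{smallmatrix}\right)$, it treats $a=0$ separately (where the set visibly contains $|b|\NN$), and for $a\neq0$ factors $M=\left(\begin{smallmatrix}a&0\\c&a^{-1}\end{smallmatrix}\right)\left(\begin{smallmatrix}1&ba^{-1}\\0&1\end{smallmatrix}\right)$ and exhibits lattice points with $y$ large by using that $\{am+cn:n>|b|\}$ is either dense in $\RR$ or hits $0$, according as $c/a$ is irrational or rational. This handles all $t\in(0,1]$ uniformly, with no appeal to Minkowski and no boundary case.
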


\begin{proof}
Let us begin by showing that
\begin{equation}\label{theset}
\bigg\{ y > 0 \;\bigg|\; (x,y)\in\ZZ^2 M,\; -t < x \leq 1-t  \bigg\} 
\end{equation}
is nonempty for every $M\in G$, $t\in(0,1]$. Let
\begin{equation}
M=\begin{pmatrix} a & b \\ c & d \end{pmatrix},
\end{equation}
and assume first that $a=0$. Then $c\neq 0$ and $b=-1/c$, and \eqref{theset} becomes
\begin{equation}\label{theset2}
\bigg\{ bm+dn > 0  \;\bigg|\; (m,n)\in\ZZ^2  ,\;  -t < cn \leq 1-t \bigg\} 
\supset |b|\NN,
\end{equation}
which is nonempty. If $a\neq 0$, we have
\begin{equation}
M= \begin{pmatrix} a & b \\ c & d \end{pmatrix} = \begin{pmatrix} a & 0 \\ c & a^{-1} \end{pmatrix} \begin{pmatrix} 1 & ba^{-1} \\ 0 & 1 \end{pmatrix} , 
\end{equation}
and so \eqref{theset} equals
\begin{equation}\label{theset3}
\bigg\{ y +ba^{-1} x > 0 \;\bigg|\;(x,y)\in\ZZ^2 \begin{pmatrix} a & 0 \\ c & a^{-1} \end{pmatrix} ,\;  -t < x \leq 1-t  \bigg\}  .
\end{equation}
Since $-t < x \leq 1-t$ implies $|x|\leq 1$, the set in \eqref{theset3} contains the set
\begin{multline}\label{theset4}
\bigg\{ y +ba^{-1} x  \;\bigg|\;(x,y)\in\ZZ^2 \begin{pmatrix} a & 0 \\ c & a^{-1} \end{pmatrix} ,\;  -t < x \leq 1-t,\; y  > |ba^{-1}|  \bigg\}  \\
= \bigg\{bm+dn  \;\bigg|\;(m,n)\in\ZZ^2 ,\;  -t < am+cn \leq 1-t,\;  n  > |b|  \bigg\}  .
\end{multline}
If $c/a$ is rational, there exist $(m,n)\in\ZZ^2 $ with $n> |b|$ such that $am+cn=0$. If $c/a$ is irrational, then the set $\{ am+cn \mid  (m,n)\in\ZZ^2  ,\; n  > |b| \}$ is dense in $\RR$. Therefore, in both cases, \eqref{theset4} is nonempty, and the minimum of \eqref{theset} exists due to the discreteness of $\ZZ^2 M$.

Finally, we note that $F(\,\cdot\,,t)$ is well-defined on $\GamG$ since $F(M,t)=F(\gamma M,t)$ for all $M\in G$, $\gamma\in\Gamma$.
\end{proof}

\begin{figure}
\begin{center}
\begin{minipage}{0.4\textwidth}
\unitlength0.1\textwidth
\begin{picture}(10,10)(0,0)
\put(0,1){\includegraphics[width=\textwidth]{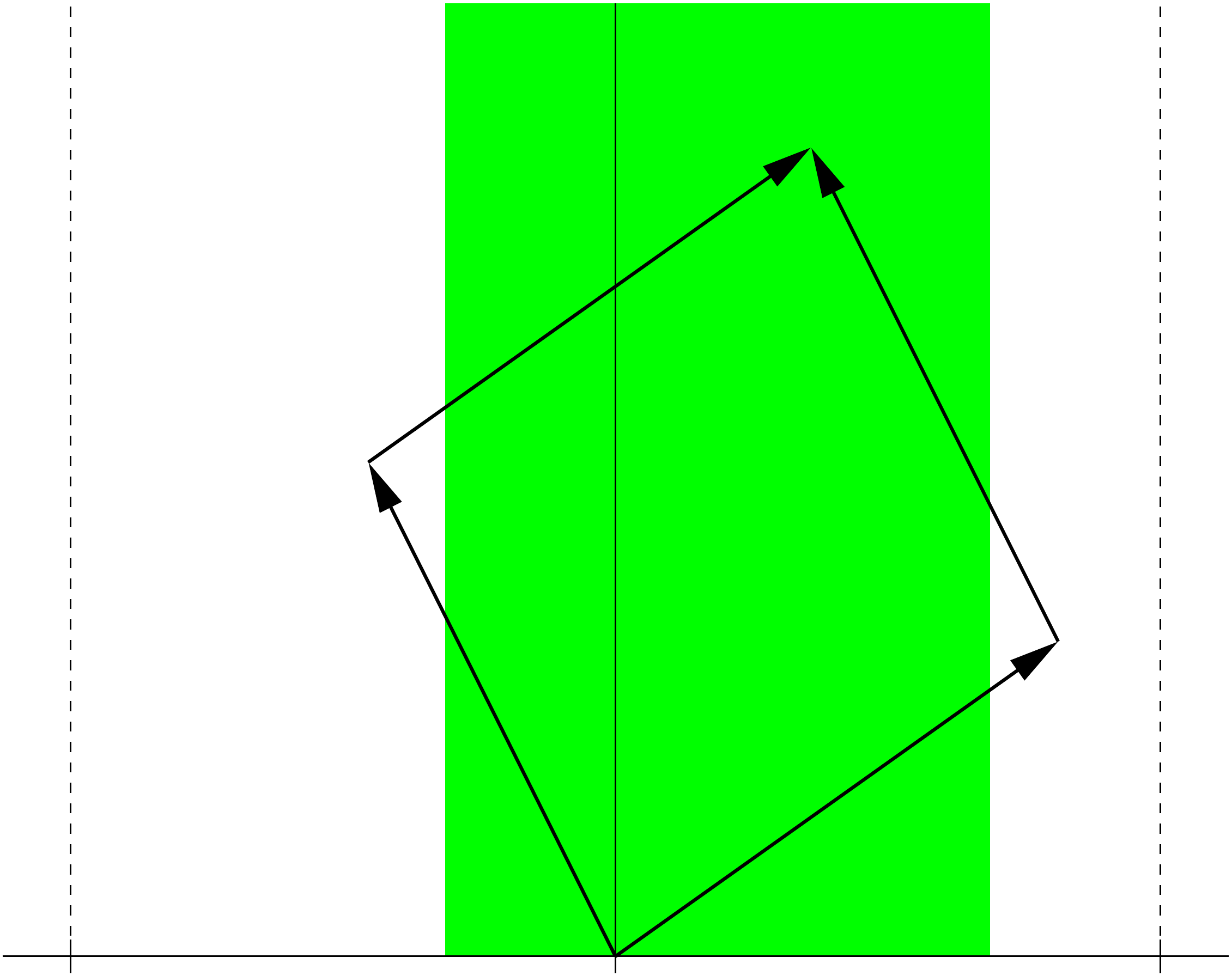}}
\put(2.5,5){$\vecs$}\put(8.8,3.5){$\vecr$}  \put(5.9,7.9){$\vecr+\vecs$}  
\put(0,0.5){$-1$}\put(3.2,0.5){$-t$} \put(4.85,0.5){$0$} \put(7.3,0.5){$1-t$} \put(9.3,0.5){$1$} 
\end{picture}
\end{minipage}
\end{center}
\caption{Illustration of the lattice configuration in the proof of Proposition \ref{prop:three}.} \label{fig1}
\end{figure}

The following assertion implies the classical three gap theorem; recall \eqref{key}. 

\begin{prop}\label{prop:three}
For every given $M\in G$, the function $t\mapsto F(M,t)$ is piecewise constant and takes at most three distinct values. If there are three values, then the third is the sum of the first and second.
\end{prop}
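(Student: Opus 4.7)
The plan is to identify two distinguished lattice vectors $\vecr, \vecs \in \scrL = \ZZ^2 M$ whose $y$-coordinates, together with their sum, capture every value taken by $F(M,\cdot)$. Specifically, I would take $\vecr = (x_r, y_r) \in \scrL$ with $y_r > 0$ minimal subject to $x_r \in (0, 1]$, and $\vecs = (x_s, y_s) \in \scrL$ with $y_s > 0$ minimal subject to $x_s \in (-1, 0]$; existence follows from the argument of Proposition~\ref{prop1}, and ties can be broken by a lexicographic rule. The strip in \eqref{Fdef} then contains $\vecr$ precisely for $t \in (0, 1 - x_r]$ and $\vecs$ precisely for $t \in (-x_s, 1]$.

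The core of the argument would be the following auxiliary lemma: every $\vecv \in \scrL$ with $y_v > 0$ and $x$-coordinate in the open interval $(x_s, x_r)$ satisfies $y_v \geq y_r + y_s$. To establish it, I would first prove that $\vecr, \vecs$ form a $\ZZ$-basis of $\scrL$: their determinant $x_r y_s - x_s y_r$ is a strictly positive integer (positive since $x_r, y_r, y_s > 0$ and $x_s \leq 0$; integer by the unit covolume of $\scrL$), and if it were $\geq 2$, a nonzero lattice vector inside the half-open parallelogram spanned by $\vecr, \vecs$ would, after translating by a suitable multiple of $\vecr$ or $\vecs$, produce a lattice vector contradicting the minimality of $\vecr$ or $\vecs$. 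Writing $\vecv = n \vecr + m \vecs$ with $n, m \in \ZZ$, the constraint $y_v > 0$ and $n \leq 0$ forces $m \geq 1$, and then the identity $x_v - x_s = n x_r + (m-1) x_s$ combined with $x_r > 0$ and $x_s \leq 0$ shows $x_v \leq x_s$ with strict inequality unless $\vecv = \vecs$; the case $m \leq 0$ is symmetric, yielding $x_v \geq x_r$. Hence $n, m \geq 1$ and $y_v \geq y_r + y_s$, as claimed.

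With the lemma in hand, the proof splits into two cases. When $x_r - x_s \leq 1$, the intervals $(0, 1 - x_r]$ and $(-x_s, 1]$ together cover $(0, 1]$, so for every $t$ the strip contains $\vecr$ or $\vecs$ (or both); combining this with the trivial lower bound $F(M, t) \geq \min(y_r, y_s)$ (any lattice vector in the strip has $x$-coordinate in $(0,1]$ or $(-1,0]$) and the auxiliary lemma to rule out intermediate competitors yields $F(M, t) \in \{y_r, y_s\}$. When $x_r - x_s > 1$, the gap $t \in (1 - x_r, -x_s]$ is non-empty, and for $t$ in the gap the whole strip $(-t, 1-t]$ lies in $(x_s, x_r)$; the auxiliary lemma then gives $F(M, t) \geq y_r + y_s$, while $\vecr + \vecs$ (whose $x$-coordinate $x_r + x_s$ lies strictly inside $(x_s, x_r)$, using $x_r > 0$ and $x_s < 0$ in this case) lies in the strip throughout the gap, giving the matching upper bound. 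Putting the ranges together, $F(M,\cdot)$ is piecewise constant and takes at most the three values $y_r, y_s, y_r + y_s$, with the third explicitly the sum of the first two.

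The main obstacle I anticipate is the auxiliary lemma, specifically the basis property $\det(\vecr, \vecs) = 1$: the proof requires a careful case analysis on the sign of the $x$-coordinate of a hypothetical lattice vector in the fundamental parallelogram, and selecting the right translate to reach a contradiction with minimality. Once the basis is in hand, the rest of the lemma and the case analysis for Proposition~\ref{prop:three} are clean integer arithmetic exploiting $x_r > 0$ and $x_s \leq 0$.
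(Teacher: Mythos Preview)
Your approach is correct and closely parallel to the paper's, but with a genuinely different choice of the distinguished vectors. The paper takes $\vecr$ to be a point of $\scrL$ in the whole strip $(-1,1)\times\RR_{>0}$ with minimal $y$-coordinate, and $\vecs$ a point of minimal $y$-coordinate in that strip outside $\ZZ\vecr$; it then \emph{proves} that $r_1$ and $s_1$ have opposite signs. You instead build the opposite signs into the definition by choosing $\vecr,\vecs$ separately from the right and left half-strips. This buys you a very clean auxiliary lemma (any lattice point with $x$-coordinate strictly between $x_s$ and $x_r$ has height at least $y_r+y_s$), from which the three-value conclusion drops out by a transparent interval dichotomy. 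The paper's choice, on the other hand, makes the basis property nearly immediate: since $\vecs$ has the second smallest height in the full strip, any interior point of the fundamental parallelogram (or its reflection through the centre) would have smaller height than $\vecs$ and lie in the strip, an instant contradiction. Your basis argument is a bit more delicate---and you correctly flag it---because your minimality conditions live in disjoint half-strips; the key observation that rescues it is that if $\vecu$ lies in the open parallelogram with $x_u>0$, then $\vecu-\vecr$ has $x$-coordinate in $(-x_r,0)\subseteq(-1,0)$, so the minimality of $\vecs$ applies to it (and symmetrically when $x_u\le 0$). With your lexicographic tie-breaking this closes all cases.

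One small gap to patch: the existence of $\vecr$ does \emph{not} follow from Proposition~\ref{prop1}, since the strips $(-t,1-t]$ there never specialise to $(0,1]$. Indeed, if the $x$-coordinates of $\scrL$ form a subgroup $\gamma\ZZ$ with $\gamma>1$ (for instance $M=\left(\begin{smallmatrix}0&-1/\gamma\\\gamma&0\end{smallmatrix}\right)$), then there is no lattice point with $x\in(0,1]$ at all. In that degenerate case every lattice point with $x\in(-1,1]$ has $x=0$, so $F(M,t)=y_s$ for all $t$ and the proposition holds trivially; you should state and dispose of this case before selecting $\vecr$. The paper avoids this issue because its $\vecr$ is drawn from the full strip $(-1,1)\times\RR_{>0}$, which is never empty.
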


\begin{proof}
Among all points of $\scrL=\ZZ^2 M$ in the region $\scrA=(-1,1)\times \RR_{>0}$, 
let $\vecr=(r_1,r_2)$ be a point with minimal second coordinate $r_2$. See Figure \ref{fig1}.
Next let $\vecs=(s_1,s_2)$ be a point in $\scrA\cap\scrL\setminus\Z\vecr$ with $s_2$ minimal. (If such a vector $\vecs$ does not exist, then $F(M,t)=r_2$ for all $t$.) Then $s_2\geq r_2>0$. Let us assume $s_2>r_2$ (the case $s_2=r_2$ is treated at the end of the proof).

The parallelogram $\vecnull,\vecr,\vecs,\vecr+\vecs$ does not contain any other lattice points: if $\vecu$ were such a lattice point, then $\vecu$ or $\vecr+\vecs-\vecu$ would have second coordinate smaller than $s_2$, contradicting the assumed minimality of $s_2$. This implies that $\vecr,\vecs$ form a basis of $\scrL$. 

Note that $r_1$ and $s_1$ must have opposite signs, i.e.\ $r_1s_1<0$,
since otherwise $\vecs-\vecr\in\scrA$ with a second coordinate that is smaller than $s_2$, 
contradicting the assumed minimality of $s_2$.
It follows that, if we set $\scrJ_{\vecr}=(0,1]\cap(-r_1,1-r_1]$
and $\scrJ_{\vecs}=(0,1]\cap(-s_1,1-s_1]$, 
then one of these intervals is of the form $(0,q]$ and the other is of the form $(q',1]$,
for some $q,q'\in(0,1)$. Note that both intervals are nonempty since $\vecr,\vecs\in\scrA$ by construction, and thus $|r_1|$, $|s_1|<1$. 
More explicitly,
\begin{equation}
\scrJ_{\vecr} = \begin{cases}
(-r_1,1] & \text{if $-1<r_1\leq 0$}\\
(0,1-r_1] & \text{if $0\leq r_1<1$,}
\end{cases}
\end{equation}
and similarly for $\scrJ_{\vecs}$.
Now in view of definition \eqref{Fdef}, we obtain
\begin{equation}
F(M,t)=
\begin{cases}
r_2 & \text{if $t\in\scrJ_{\vecr}$}\\
s_2 & \text{if $t\in\scrJ_{\vecs}\setminus\scrJ_{\vecr}$}\\
r_2+s_2 & \text{if $t\in(0,1]\setminus(\scrJ_{\vecr}\cup\scrJ_{\vecs}).$}
\end{cases}
\end{equation}
(Here the set $(0,1]\setminus(\scrJ_{\vecr}\cup\scrJ_{\vecs})$ may be empty.)
Thus, for any fixed $M$, the function $F(M,\,\cdot\,)$ can only take one of the three values $r_2,s_2,r_2+s_2$.

Now consider the remaining case $s_2=r_2$. We choose $\vecr,\vecs\in\scrA\cap\scrL$ so that $\vecr=(r_1,r_2)$ has minimal $r_1\geq 0$, and $\vecs=(s_1,r_2)$ has maximal $s_1< 0$. We can then proceed as above to obtain $F(M,t)=r_2$ for $t\in (0,1-r_1] \cup (-s_1,1]$ and  $F(M,t)=2 r_2$ for all other $t$ in $(0,1]$.

\end{proof}

\subsection*{Acknowledgment}
We thank both referees for their valuable feedback. The research leading to these results has received funding from the European Research Council under the European Union's Seventh Framework Programme (FP/2007-2013) / ERC Grant Agreement n. 291147. A.S.\ is supported by a grant from the G\"oran Gustafsson Foundation for Research in Natural Sciences and Medicine, and also by the Swedish Research Council Grant 621-2011-3629.

\end{document}